\documentclass{amsart}

\usepackage{amsmath,amssymb,amsthm}
\usepackage{fullpage}

\newtheorem{theorem}{Theorem}
\newtheorem{lemma}{Lemma}
\newtheorem{proposition}{Proposition}

\title{(Claw, $\vec C_3$)-free Digraphs with Unbounded Dichromatic Number}

\author{Guillaume Aubian}
\address{CRED, Université Paris-Panthéon-Assas}

\author{Luis Kuffner}
\address{Université Paris Cité, CNRS, IRIF, F-75013 Paris, France}

\date{November 2025}

\begin{document}
\maketitle

\begin{abstract}
We construct orientations of rook graphs (whose underlying graphs are claw-free) that contain no $\vec C_3$ but have unbounded dichromatic number. This disproves a conjecture of Aboulker, Charbit and Naserasr and improves a result of Carbonero, Koerts, Moore and Spirkl.
\end{abstract}

\section{Introduction and notation}

A hero is a tournament $H$ such that tournaments not containing $H$ have bounded dichromatic number. In a seminal paper \cite{BCCFLSST13}, Berger, Choromanski, Chudnovski, Fox, Loebl, Scott, Seymour and Thomassé obtained a structure theorem for heroes. Harutyunyan, Le, Newman and Thomassé \cite{HLNT17} later proved that oriented graphs with bounded independence number (tournaments being the special case where the independence number equals one) not containing a hero have bounded dichromatic number. Chudnovsky, Scott and Seymour \cite{CSS19} subsequently showed that for any transitive tournament $H$ and oriented star $S$, oriented graphs containing neither $S$ nor $H$ as induced subdigraphs have bounded dichromatic number. Aboulker, Charbit and Naserasr \cite{ACN21} thus wondered whether (and actually conjectured that), for any hero $H$ and any oriented star forest $S$, oriented graphs containing neither $S$ nor $H$ as induced subdigraphs have bounded dichromatic number.

\smallskip{}

Aboulker, Aubian and Charbit \cite{AAC24} provided a counterexample when $S = K_1 + \vec K_2$. Carbonero, Koerts, Moore and Spirkl \cite{CKMS25} did so when $H = \Delta(1,1,C_3)$ and $S$ is any orientation of $K_{1,4}$, and also when $H = \vec C_3$ and $S$ is any orientation of $K_{1,5}$. We improve these two latter results with a counterexample when $H = \vec C_3$ and $S$ is any orientation of $K_{1,3}$, i.e. the claw.

\bigskip{}

Definitions in this paper are standard and follow from Bondy and Murty's book \cite{BM08}. In this paper, we study oriented graphs, i.e. directed graphs with no pair of opposite arcs. Given two integers $n,i \ge 0$, we write $n_i$ for the $i$th least significant bit of the binary expansion of $n$, starting from $i=0$.

\section{Construction and proof}

Let $N$ be a positive integer. The \emph{$N \times N$ rook graph} $R_N$ is the graph with vertex set $[1,N]^2$ and with an edge between two vertices whenever they share their first coordinate or they share their second coordinate. Intuitively, it is the graph describing how a rook moves on a $N \times N$ chessboard. It is the line graph of $K_{N,N}$, and therefore is claw-free. We orient it into a digraph $D_N$ as follows. For any vertices $(a,b),\ (a,d),\ (c,b)$, we orient the edge from $(a,b)$ to $(a,d)$ if and only if $b_i = a_i$, where $i = \min\{ j \mid b_j \ne d_j \}$, and we orient the edge from $(a,b)$ to $(c,b)$ if and only if $b_i \ne a_i$, where $i = \min\{ j \mid a_j \ne c_j \}$.

\begin{proposition}
The digraph $D_N$ contains no $\vec C_3$.
\end{proposition}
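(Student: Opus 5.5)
The plan is to first pin down which triples of vertices of $R_N$ can carry a $\vec C_3$ at all, and then to show that on each such triple the orientation of $D_N$ is induced by a linear order, so it is transitive.

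\emph{Step 1 (triangles of $R_N$).} I would show that every triangle of $R_N$ lies in a single row or a single column. Suppose $(a,b),(c,d),(e,f)$ are pairwise adjacent but not all in one row. Without loss of generality $(a,b)$ and $(c,d)$ share a coordinate, say $a=c$ and $b\neq d$. If $e\ne a$, then $(e,f)$ must share the second coordinate with both $(a,b)$ and $(a,d)$. This forces $f=b$ and $f=d$, which is a contradiction. The case of a shared second coordinate is symmetric. Equivalently: $R_N$ is the line graph of the bipartite, hence triangle-free, graph $K_{N,N}$, so every triangle of $R_N$ comes from three edges at a common vertex. This is the only structural step, and I expect it to be the main point needing care; the rest is a bit computation.

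\emph{Step 2 (fixed row).} Fix $a$ and consider the vertices $(a,x)$ for $x\in[1,N]$. For $x\ne y$, let $i$ be the least index with $x_i\neq y_i$. Exactly one of $x_i,y_i$ equals $a_i$, so the rule orients the edge consistently: $(a,x)\to(a,y)$ iff $x_i=a_i$. Set $x'=x \oplus a$ (bitwise XOR). Then $x'$ and $y'$ first differ at the same index $i$, and $(a,x)\to(a,y)$ iff $x'_i=0$ and $y'_i=1$. In other words, $(a,x)\to(a,y)$ iff $x'$ precedes $y'$ in the lexicographic order that reads bits starting from the least significant one. Since $x\mapsto x'$ is injective and this lexicographic order is a strict linear order, the row induces a transitive tournament. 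Hence no $\vec C_3$ lies in a row.

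\emph{Step 3 (fixed column).} The column case is analogous. Fix $b$ and let $x'=x\oplus b$. For $x\ne y$ first differing at bit $i$, we have $(x,b)\to(y,b)$ iff $x_i\ne b_i$, that is, iff $x'_i=1$ and $y'_i=0$. This is the reverse of the same linear order, so the column is again a transitive tournament.

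Combining Steps 1--3, every triangle of $R_N$ lies inside a row or a column and is therefore transitively oriented in $D_N$. Hence $D_N$ contains no $\vec C_3$.
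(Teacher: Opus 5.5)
Your proof is correct, but you handle the core step differently from the paper.

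\textbf{The paper's route.} It argues locally, one triple at a time. For $(a,b),(a,c),(a,d)$ in a row, take the least index $i$ at which $b_i,c_i,d_i$ are not all equal, say $b_i\ne c_i=d_i$. Both edges at $(a,b)$ are then oriented by the single test $a_i=b_i$. So $(a,b)$ is a source or a sink of the triangle, and no $\vec C_3$ can occur.

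\textbf{Your route.} You argue globally. You give an explicit ranking of each row: XOR with $a$, then the lexicographic order read from the least significant bit. Each column gets the reverse of that order after XOR with $b$. Hence every row and every column is a transitive tournament.

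\textbf{Comparison.} Both arguments rest on the same first-differing-bit observation. Since a tournament is transitive exactly when it has no $\vec C_3$, the two conclusions are equivalent.
\begin{itemize}
\item The paper's version is shorter and needs no auxiliary order or transitivity check.
\item Yours gives a closed-form description of the orientation on every row and column.
\item You make explicit that the defining rule really yields an orientation (exactly one of $x_i,y_i$ equals $a_i$), which the paper leaves implicit.
\item You also prove that every triangle of $R_N$ lies in a single row or column, which the paper only asserts.
\end{itemize}
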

\begin{proof}
Given three vertices $(a,b)$, $(a,c)$ and $(a,d)$ on the same row, let $i = \min\{j \mid \{b_j, c_j, d_j\} = \{0,1\}\}$. Without loss of generality let us assume that $b_i \ne c_i = d_i$. Edges $(a,b)(a,c)$ and $(a,b)(a,d)$ are oriented only depending on whether $a_i = b_i$ and thus have the same orientation. Thus, $(a,b)$, $(a,c)$ and $(a,d)$ can not form a $\vec C_3$. The same argument applies to any column. Since every triangle in a rook graph is contained entirely in a single row or in a single column, $D_N$ contains no $\vec C_3$.
\end{proof}

\begin{lemma}
When $|a - c| = |b - d|$, vertices $(a,b),\ (a,d),\ (c,d),\ (c,b)$ induce a directed $4$-cycle.
\end{lemma}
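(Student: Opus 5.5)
The plan is to show that the four edges all use the \emph{same} bit index $i$ when their orientations are decided, and then check the four orientation conditions against each other. We assume $a \ne c$ (otherwise the four vertices are not distinct), so $k := |a-c| = |b-d| > 0$.

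The first and key step is to identify the two indices. Let $i = \min\{j \mid a_j \ne c_j\}$ and $i' = \min\{j \mid b_j \ne d_j\}$. For any two distinct nonnegative integers $x,y$, the least significant bit where they differ is exactly the $2$-adic valuation of $x-y$. Indeed, the bits below that position agree, so $x-y$ is divisible by $2^i$. Moreover, $(x-y)/2^i \equiv x_i - y_i \equiv 1 \pmod 2$. Hence $i = v_2(k) = i'$. This is the step I expect to carry the whole argument; the rest is bookkeeping. It also gives the two facts we need: $c_i = 1-a_i$ and $d_i = 1-b_i$.

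Next I would write out the four orientation rules, going around the cycle $(a,b),(a,d),(c,d),(c,b)$. All of them use the index $i$:
\begin{itemize}
\item In row $a$, the arc goes $(a,b)\to(a,d)$ iff $b_i=a_i$.
\item In column $d$, the arc goes $(a,d)\to(c,d)$ iff $d_i\ne a_i$.
\item In row $c$, the arc goes $(c,d)\to(c,b)$ iff $d_i=c_i$.
\item In column $b$, the arc goes $(c,b)\to(a,b)$ iff $b_i\ne c_i$.
\end{itemize}
Substitute $c_i=1-a_i$ and $d_i=1-b_i$. Each of the last three conditions becomes equivalent to $a_i=b_i$; for instance, $d_i\ne a_i \iff 1-b_i\ne a_i \iff a_i=b_i$. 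So all four conditions are equivalent to $a_i=b_i$. If $a_i=b_i$, all four arcs point forward around the cycle; otherwise all four point backward. In either case we obtain a directed $4$-cycle.

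Finally, the $4$-cycle is induced. The only other possible edges are $(a,b)(c,d)$ and $(a,d)(c,b)$. These pairs share neither a row nor a column, since $a\ne c$ and $b\ne d$. Hence they are non-adjacent in $R_N$, and so also in $D_N$.
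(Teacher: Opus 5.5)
Your proof is correct and takes essentially the same approach as the paper: both identify the common index $i$ as the lowest set bit of $|a-c|=|b-d|$, and then verify the four orientation rules around the cycle. The differences are cosmetic. The paper normalizes to $a_i=b_i=1$, $c_i=d_i=0$ by swapping $a,c$ and/or $b,d$, whereas you substitute $c_i=1-a_i$, $d_i=1-b_i$ directly; you also justify the equality of the two indices via the $2$-adic valuation and check that the cycle has no chords, both of which the paper leaves implicit.
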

\begin{proof}
Let $i=\min \{j \mid |a - c|_{j}  = 1\}$. Then $i = \min\{j \mid a_j \ne c_j\} = \min\{j \mid b_j \ne d_j\}$.
Up to permuting $a$ and $c$ and/or permuting $b$ and $d$, we may assume $a_i = b_i = 1$ and $c_i = d_i = 0$.
Since $b_i = a_i$, we have $(a,b) \to (a,d)$. Since $d_i \ne a_i$, we have $(a,d) \to (c,d)$. Since $d_i = c_i$, we have $(c,d) \to (c,b)$. Since $b_i \ne c_i$, we have $(c,b) \to (a,b)$.
Thus the four vertices form the directed cycle $(a,b) \to (a,d) \to (c,d) \to (c,b) \to (a,b)$.
\end{proof}

Note that it actually suffices that $d_2(a,c) = d_2(b,d)$ where $d_2$ is the dyadic distance, but this stronger condition suffices for the sake of our result.

\begin{theorem}
The family $(D_N)_{N\ge 1}$ has unbounded dichromatic number.
\end{theorem}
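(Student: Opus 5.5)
The plan is to reduce the statement to a Ramsey-type fact about monochromatic squares in a coloured grid, using the Lemma as the only digraph-theoretic input. Suppose, for contradiction, that there is an integer $k$ such that every $D_N$ has dichromatic number at most $k$. Fix $N$, to be chosen large, and an acyclic $k$-colouring $\varphi \colon [1,N]^2 \to [1,k]$ of $D_N$. Then no colour class may contain a directed cycle. In particular, by the Lemma, no colour class may contain the four vertices $(a,b),(a,d),(c,d),(c,b)$ with $a \ne c$ and $|a-c| = |b-d|$. These four points are exactly the corners of an axis-parallel square in the grid $[1,N]^2$, with side length $|a-c| \ge 1$.

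It therefore suffices to show the following. For every $k$ there is an $N$ such that every $k$-colouring of $[1,N]^2$ contains a monochromatic set $\{(x,y),(x+t,y),(x,y+t),(x+t,y+t)\}$ with $t \ge 1$. Taking $a=x$, $c=x+t$, $b=y$, $d=y+t$ then gives $|a-c|=|b-d|=t>0$, so the Lemma yields a directed $4$-cycle inside a single colour class. This contradicts acyclicity, and hence the dichromatic number of $D_N$ exceeds $k$.

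The needed fact is the two-dimensional case of the Gallai (Gallai--Witt) theorem. For any finite set $F \subseteq \mathbb{Z}^2$ and any number of colours $k$, there is an $N$ such that every $k$-colouring of $[1,N]^2$ contains a monochromatic positive homothetic copy $v + tF$ with $t \ge 1$. We apply it with $F = \{0,1\}^2$. I would simply cite this theorem. If a self-contained argument is preferred, I would derive it from the Hales--Jewett theorem as follows. Consider combinatorial lines in $F^m$ for $m = HJ(4,k)$, and map a word $w \in F^m$ to $\sum_i w_i \in [0,m]^2$, so that $N = m+1$ suffices after translation. A combinatorial line with active coordinate set $S$ maps to $v + |S|\cdot F$, which is a square of side $|S| \ge 1$. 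A $k$-colouring of the grid pulls back to a $k$-colouring of $F^m$, and Hales--Jewett then gives a monochromatic line, hence a monochromatic square.

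The main obstacle is only making sure the Ramsey statement applies with the exact constraints of the Lemma. The square must be non-degenerate ($t \ge 1$, so that $a \ne c$ and $b \ne d$ and the Lemma's minimum index exists), and it must lie inside $[1,N]^2$. Both properties are guaranteed by the Hales--Jewett derivation above, since lines have nonempty active sets. The rest of the argument is immediate.
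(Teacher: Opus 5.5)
Your proposal is correct and follows the paper's proof: the Gallai--Witt theorem gives a monochromatic axis-parallel square, and by the Lemma that square induces a directed $4$-cycle inside one colour class. Your optional Hales--Jewett derivation, which maps $w \in (\{0,1\}^2)^m$ to $\sum_i w_i$ so that a combinatorial line with active set $S$ becomes a square of side $|S| \ge 1$, is also valid, whereas the paper simply cites Witt's theorem.
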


\begin{proof}
Fix $k$. The Gallai--Witt theorem  \cite{W51} (also known as the multidimensional van der Waerden theorem) implies that there exists $N$ such that any $k$-colouring of $[1,N]^2$ contains a monochromatic axis-parallel square, that is, four points of the form $(x,y),\ (x+t,y),\ (x,y+t),\ (x+t,y+t)$ for some integers $x,y,t>0$. Thus, for $N$ large enough, any colouring of $V(D_N)$ with $k$ colours contains a monochromatic square as above. By the previous lemma, the four vertices of such a square induce a directed $4$-cycle. Hence, some colour class does not induce an acyclic subdigraph. Since $k$ is arbitrary, the dichromatic number of $D_N$ is unbounded.
\end{proof}

\bibliographystyle{amsplain}
\bibliography{main}

@article{HLNT17,
  TITLE = {{Coloring dense digraphs}},
  AUTHOR = {Harutyunyan, Ararat and Le, Tien-Nam and Newman, Alantha and Thomass{\'e}, St{\'e}phan},
  URL = {https://hal.science/hal-03943075},
  JOURNAL = {{Electronic Notes in Discrete Mathematics}},
  PUBLISHER = {{Elsevier}},
  VOLUME = {61},
  PAGES = {577-583},
  YEAR = {2017},
  MONTH = Aug,
  DOI = {10.1016/j.endm.2017.07.010},
  HAL_ID = {hal-03943075},
  HAL_VERSION = {v1},
}

@article{CKMS25,
title = {On heroes in digraphs with forbidden induced forests},
journal = {European Journal of Combinatorics},
volume = {125},
pages = {104104},
year = {2025},
issn = {0195-6698},
doi = {https://doi.org/10.1016/j.ejc.2024.104104},
url = {https://www.sciencedirect.com/science/article/pii/S0195669824001896},
author = {Alvaro Carbonero and Hidde Koerts and Benjamin Moore and Sophie Spirkl}
}

@article{ACN21,
  title        = {Extension of Gyárfás--Sumner Conjecture to Digraphs},
  author       = {Pierre Aboulker and Pierre Charbit and Reza Naserasr},
  journal      = {The Electronic Journal of Combinatorics},
  volume       = {28},
  number       = {2},
  pages        = {P2.27},
  year         = {2021},
  doi          = {10.37236/9906},
  url          = {https://www.combinatorics.org/ojs/index.php/eljc/article/view/v28i2p27}
}

@article{AAC24,
author = {Aboulker, Pierre and Aubian, Guillaume and Charbit, Pierre},
title = {Heroes in oriented complete multipartite graphs},
journal = {Journal of Graph Theory},
volume = {105},
number = {4},
pages = {652-669},
doi = {https://doi.org/10.1002/jgt.23061},
url = {https://onlinelibrary.wiley.com/doi/abs/10.1002/jgt.23061},
eprint = {https://onlinelibrary.wiley.com/doi/pdf/10.1002/jgt.23061},
year = {2024}
}

@article{W51,
  author = {Witt, Ernst},
  title = {Ein kombinatorischer Satz der Elementargeometrie},
  journal = {Mathematische Nachrichten},
  volume = {6},
  pages = {261--262},
  year = {1951}
}

@book{BM08,
author = {Bondy, J.A. and Murty, U.S.R},
title = {Graph Theory},
year = {2008},
isbn = {1846289696},
publisher = {Springer Publishing Company, Incorporated},
edition = {1st}
}

@article{CSS19,
title = {Induced subgraphs of graphs with large chromatic number. XI. Orientations},
journal = {European Journal of Combinatorics},
volume = {76},
pages = {53-61},
year = {2019},
issn = {0195-6698},
doi = {https://doi.org/10.1016/j.ejc.2018.09.003},
url = {https://www.sciencedirect.com/science/article/pii/S0195669818301562},
author = {Maria Chudnovsky and Alex Scott and Paul Seymour}
}

@article{BCCFLSST13,
title = {Tournaments and colouring},
journal = {Journal of Combinatorial Theory, Series B},
volume = {103},
number = {1},
pages = {1-20},
year = {2013},
issn = {0095-8956},
doi = {https://doi.org/10.1016/j.jctb.2012.08.003},
url = {https://www.sciencedirect.com/science/article/pii/S0095895612000664},
author = {Eli Berger and Krzysztof Choromanski and Maria Chudnovsky and Jacob Fox and Martin Loebl and Alex Scott and Paul Seymour and Stéphan Thomassé}
}

\end{document}